\newtheorem{theorem}{Theorem}
\newtheorem{lemma}{Lemma}
\newtheorem{remark}{Remark}
\newcommand{\leqnomode}{\tagsleft@true}
\newcommand{\reqnomode}{\tagsleft@false}
\def\({\begin{eqnarray}}
\def\){\end{eqnarray}}
\def\[{\begin{eqnarray*}}
\def\]{\end{eqnarray*}}
\def\part#1#2{\frac{\partial #1}{\partial #2}}
\def\R{\mathbb{R}}
\def\N{\mathbb{N}}
\def\d{\mathrm{d}}
\def\tot#1#2{\frac{\d #1}{\d #2}}
\def\upsi{{\overline\psi}}
\def\uPsi{{\overline\Psi}}
\def\cJH#1{\textcolor{blue}{\bf [#1]}}
\begin{document}

\title{Flocking in the Cucker-Smale model with self-delay and nonsymmetric interaction weights}   % type title between braces
\author{Jan Haskovec\footnote{Computer, Electrical and Mathematical Sciences \& Engineering, King Abdullah University of Science and Technology, 23955 Thuwal, KSA.
jan.haskovec@kaust.edu.sa}}

\date{}

\maketitle

\begin{abstract}
We derive a sufficient condition
for asymptotic flocking in the Cucker-Smale model with self-delay 
(also called reaction delay) and with non-symmetric interaction weights.
The condition prescribes smallness of the delay length
relative to the decay rate of the inter-agent communication weight.
The proof is carried out by a bootstrapping argument combining
a decay estimate for the group velocity diameter
with a variant of the Gronwall-Halanay inequality.
\end{abstract}
\vspace{3mm}

\textbf{Keywords}: Cucker-Smale model, flocking, self-delay, nonsymmetric interaction weights.
\vspace{3mm}

\textbf{2010 MR Subject Classification}: 34K05, 82C22, 34D05, 92D50.
\vspace{3mm}

%%%%%%%%%%%%%%%%%%%%%%%%%%%%%%%%%%%%%%%%
\section{Introduction}\label{sec:Intro}
In this paper we study the asymptotic behavior
of the Cucker-Smale model \cite{CS1, CS2} with self-delay,
also called reaction-type delay in the previous works \cite{HasMar, HasMar2}.
The Cucker-Smale flocking model is a prototypical model of collective behavior
\cite{Naldi, Vicsek}, describing the dynamics of a group of $N\in\N$ agents,
characterized by their positions $x_i\in\R^d$
and velocities $v_i\in\R^d$,  $i\in\{1,2,\dots,N\}$, with $d\geq 1$.
The agents align their velocities to the average velocity of their conspecifics.
Motivated by applications in biology and social sciences \cite{Camazine, Castellano, Smith} or engineering problems
(for instance, swarm robotics \cite{Hamman, E6, E3B}),
we introduce a fixed time span $\tau>0$ for the agents to process the information
received from their surroundings and take appropriate action.
This leads to the system of delay (functional) differential equations \cite{Smith},
\( 
   \dot x_i(t) &=& v_i(t),   \label{eq:1} \\
   \dot v_i(t) &=& \sum_{j=1}^N \psi_{ij}(t-\tau) (v_j(t-\tau) - v_i(t-\tau)),  \label{eq:2}
\)
for $i\in [N]$, where here and in the sequel we denote $[N] := \{1, 2, \ldots,N\}$.
The system is equipped with the initial datum
\(   \label{IC}
   x_i(t) = x^0_i(t),\qquad
   v_i(t) = v^0_i(t),\qquad
       i\in [N], \quad t \in [-\tau,0],
\)
with prescribed continuous spatial and velocity trajectories $(x^0_i, v^0_i) \in{C}([-\tau,0])$, $i=1,\cdots,N$.
We note that we do not require \eqref{eq:1} to hold for the initial datum.

The communication weights $\psi_{ij}$ in \eqref{eq:2} measure the intensity
of the influence between agents depending on their physical (Euclidean) distance $|x_i - x_j|$.
In the classical setting \cite{CS1, CS2} the communication weights are given by
\(   \label{psi:nonr0}
   \psi_{ij}(t) :=  \frac{1}{N} \psi(|x_j(t) - x_i(t)|),
\)
with the nonnegative, bounded and continuous \emph{influence function} $\psi:[0,\infty)\to [0,\infty)$.
We adopt the assumption that $\psi(s) \leq 1$ for all $s\geq 0$. This, in fact,
can always be achieved by an eventual rescaling of time, and, therefore,
is without loss of generality.

Another form of the communication weights was introduced in \cite{MT},
where the scaling by $1/N$ is replaced by a normalization relative to the influence
of all other agents,
\(\label{psi:r0}
   \psi_{ij}(t) := \frac{\psi(|x_j(t) - x_i(t)|)}{\sum_{\ell=1}^N \psi(|x_\ell(t) - x_i(t)|)}.
\)
Again, the influence function $\psi$ is assumed to be nonnegative and continuous, and verify $\psi\leq 1$ globally.
Note that the normalization in \eqref{psi:r0} leads to nonsymmetric weights,
i.e., in general, $\psi_{ij} \neq \psi_{ji}$.

A generic choice for $\psi$, introduced in \cite{CS1, CS2},
is $\psi(s) = \frac{1}{(1+s^2)^\beta}$ with $\beta>0$. However, we do not restrict ourselves to this particular form in this paper.
Moreover, let us stress that we do \emph{not} impose any symmetry assumptions on the communication weights $\psi_{ij}$,
i.e., we admit $\psi_{ij} \neq \psi_{ji}$ for all $i,j\in [N]$.

As customary in the context of the Cucker-Smale system \cite{CS1, CS2},
we define \emph{(asymptotic) flocking} for solutions of  \eqref{eq:1}--\eqref{eq:2}  as the property
\( \label{def:flocking}
   \sup_{t\geq 0} d_x(t) < \infty, \qquad  \lim_{t\to\infty} d_v(t) = 0,
\)
where the position and, resp., velocity diameters of the agent group $d_x=d_x(t)$ and, resp., $d_v=d_v(t)$ are defined as
\(    \label{def:dxdv}
   d_x(t) := \max_{i,j \in [N]} |x_i(t) - x_j(t)|, \qquad
   d_v(t) := \max_{i,j \in [N]} |v_i(t) - v_j(t)|.
\)
Several previous works focused on derivation of sufficient conditions
for flocking in the Cucker-Smale system with delay.
The papers \cite{Cartabia, ChoiH1, ChoiH2, Choi-Pignotti, Liu-Wu, Pignotti-Reche1, Pignotti-Reche2, Pignotti-Trelat} focus on variants
of the model \emph{without} self-delay (also called propagation- or communication-type delay),
where $v_i$ in \eqref{eq:2} is evaluated at time $t$ instead of $t-\tau$.
This leads to qualitatively different dynamics compared to the system \eqref{eq:1}--\eqref{eq:2}.
In particular, one has an a-priori bound on the velocity radius $R_v(t):= \max_{i\in[N]} |v_i(t)|$
in terms of the initial datum, independently of the delay length $\tau>0$.
In contrast, the model \eqref{eq:1}--\eqref{eq:2} with self-delay exhibits,
for large enough values of $\tau>0$, oscillation of the velocities $v_i=v_i(t)$
with increasing, unbounded amplitude (see Remark \ref{rem:why} below).
In other words, the presence of self-delay fundamentally destabilizes the dynamics
of the system and flocking can only be expected for small enough values of $\tau$.

The Cucker-Smale model with self-delay was studied in \cite{HasMar, HasMar2},
but under the assumption of symmetric communication weights $\psi_{ij}$.
The symmetry leads to conservation of the total momentum $\sum_{i=1}^N v_i$,
and the analysis carried out in \cite{HasMar, HasMar2} is utilizes this fact
in a fundamental way. Finally, \cite{EHS} studies a variant of the model with self-delay and multiplicative noise.
However, the authors impose the assumption of a-priori uniform positivity
of the communication weights $\psi_{ij}$, which goes against
the substance of the Cucker-Smale model (its analysis is interesting precisely
for the fact that the communication weight vanishes at infinity).

The main novelty of this paper is that it provides a sufficient condition
for flocking in the Cucker-Smale system \emph{with} self-delay,
\emph{without} the assumption of symmetric communication weights.
To our best knowledge, no such result exists in the literature.
%The analysis is complicated by the instability (possible occurrence of unbounded oscillations)
%induced by the presence of the self-delay, and the fact that the total momentum is
%not conserved, i.e., the asymptotic flocking vector is not known a-priori.
Our flocking condition is formulated in terms of the delay length $\tau$,
the influence function $\psi$ and the position and velocity diameters
of the initial datum. It is not explicit, however, can be easily inspected numerically.
It will be presented in Section \ref{sec:main} below.

The main difficulties for the analysis stem from the following two properties of system \eqref{eq:1}--\eqref{eq:2}:
non-conservation of the total momentum $\sum_{i=1}^N v_i$ due to
the possible nonsymmetry of the communication weights,
so that the asymptotic flocking vector is not known a-priori;
and, the instability (possible occurrence of unbounded oscillations)
induced by the presence of the self-delay, so that the velocities cannot be bounded a-priori.
These difficulties are overcome by a bootstrapping argument combining
a decay estimate for the group velocity diameter
with a variant of the Gronwall-Halanay inequality,
and will be given in Section \ref{sec:proof}.

%%%%%%%%%%%%%%%%%%%%%%%%%%%%%%%%%%%%%%%%
\section{Sufficient condition for asymptotic flocking}\label{sec:main}
For the case the influence function $\psi$ was not monotone, let us define its nonincreasing rearrangement
\(  \label{Psi}
   \Psi(u) := \min_{s\in[0,u]} \psi(s) \qquad\mbox{for } u\geq 0.
\)
Moreover, let us denote the initial spatial and velocity diameters
\(  \label{def:dx0dv0}
   d_x^0 := \max_{t\in [-\tau,0]} d_x(t), \qquad
   d_v^0 := \max_{t\in [-\tau,0]} d_v(t),
\)
with $d_x$ and $d_v$ defined in \eqref{def:dxdv}.

\begin{theorem} \label{thm:main}
Let the communication weights $\psi_{ij}$ be given by \eqref{psi:nonr0} or \eqref{psi:r0}
with a nonnegative, bounded and continuous influence function $\psi\leq 1$.
Assume that there exists $C\in (0,1)$ such that
\(     \label{ass:C}
   \Psi\left( d_x^0 + (1+2\tau) \left( \tau + \frac{1}{C}\right) d_v^0 \right) - C \geq 4\tau e^{C\tau} \frac{e^{C\tau}-1}{C\tau},
\)
with $\Psi$ defined in \eqref{Psi} and $d_x^0$, $d_v^0$ given by \eqref{def:dx0dv0}.

Then the system \eqref{eq:1}--\eqref{IC} exhibits flocking in the sense of definition \eqref{def:flocking}.
Moreover, the decay of the velocity diameter is exponential with rate $C$,
\(   \label{concl:thm:main}
   d_v(t) \leq (1+2\tau) d_v^0 \, e^{-C(t-\tau)} \qquad \mbox{for all } t \geq \tau.
\)
\end{theorem}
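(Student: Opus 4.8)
The proof rests on a bootstrap coupling an exponential-decay estimate for the velocity diameter $d_v$ with an a~priori bound on the position diameter $d_x$. First I would record the elementary facts that for \emph{both} choices of weights one has $\psi_{ij}\ge 0$ and $\sum_{j=1}^N \psi_{ij}(t)\le 1$, whence $|\dot v_i(t)|\le d_v(t-\tau)$ and $\frac{\d}{\d t}d_x(t)\le d_v(t)$ for a.e.\ $t$. A short computation on the initial layer $[0,\tau]$, where the right-hand side of \eqref{eq:2} only sees the data so that $|\dot v_i|\le d_v^0$, then gives $|v_i(t)-v_j(t)|\le d_v^0+2\tau d_v^0$ there; this yields $d_v(t)\le (1+2\tau)d_v^0$ on $[0,\tau]$ and explains the prefactor in \eqref{concl:thm:main}.

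The core is a differential inequality for $d_v$. Since $d_v$ is Lipschitz, at a.e.\ $t$ its derivative equals that of $|v_i-v_j|$ for a maximizing pair $(i,j)$; writing $e=(v_i(t)-v_j(t))/d_v(t)$, extremality of the pair forces $e\cdot(v_k(t)-v_i(t))\le 0$ and $e\cdot(v_k(t)-v_j(t))\ge 0$ for every $k$. I would substitute $v_\ell(t-\tau)=v_\ell(t)-\int_{t-\tau}^t\dot v_\ell$, split off the delay error (controlled via $|\dot v_\ell|\le d_v(\cdot-\tau)$ and bounded by $4\int_{t-2\tau}^{t-\tau}d_v$), and treat the remaining instantaneous part. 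With $a_k=e\cdot(v_i-v_k)\ge 0$, $b_k=e\cdot(v_k-v_j)\ge 0$ and the identity $a_k+b_k=d_v(t)$, this part equals $-\sum_k[\psi_{ik}a_k+\psi_{jk}b_k]\le -d_v(t)\sum_k\min(\psi_{ik},\psi_{jk})$. The crucial observation is that in both normalizations $\min(\psi_{ik},\psi_{jk})\ge \Psi(d_x(t-\tau))/N$ (the numerator is $\ge\Psi(d_x(t-\tau))$ and the normalizing sum is $\le N$), so $\sum_k\min(\psi_{ik},\psi_{jk})\ge\Psi(d_x(t-\tau))$, an $N$-independent rate. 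This yields
\begin{equation}
\frac{\d}{\d t}d_v(t)\ \le\ -\Psi(d_x(t-\tau))\,d_v(t)\ +\ 4\int_{t-2\tau}^{t-\tau} d_v(s)\,\d s .
\end{equation}
I regard this as the main obstacle: extracting a rate independent of $N$ in the \emph{nonsymmetric} (renormalized) case is precisely where the $\min$-trick together with $a_k+b_k=d_v$ is essential, and it is what substitutes for the momentum-conservation arguments available only for symmetric weights.

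It remains to close the bootstrap. Suppose the target bound $d_v(t)\le (1+2\tau)d_v^0\,e^{-C(t-\tau)}$ holds on $[\tau,T]$. Integrating $\frac{\d}{\d t}d_x\le d_v$ and using the initial-layer bound gives $d_x(t)\le d_x^0+(1+2\tau)(\tau+\tfrac1C)d_v^0=:D$ on $[0,T]$, which is exactly the argument of $\Psi$ in \eqref{ass:C}; since $\Psi$ is nonincreasing, $\Psi(d_x(t-\tau))\ge\Psi(D)$ there. Inserting the candidate $\phi(t)=(1+2\tau)d_v^0\,e^{-C(t-\tau)}$ into the displayed inequality and computing $\int_{t-2\tau}^{t-\tau}\phi = \tfrac{1}{C}e^{C\tau}(e^{C\tau}-1)\,\phi(t)$, the supersolution requirement $\dot\phi\ge -\Psi(D)\phi+4\int_{t-2\tau}^{t-\tau}\phi$ reduces, upon dividing by $\phi>0$, to $\Psi(D)-C\ge \tfrac{4}{C}e^{C\tau}(e^{C\tau}-1)$, which is exactly condition \eqref{ass:C}. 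A standard comparison/continuation argument (the variant of the Gronwall--Halanay inequality, with the tangency at a first crossing handled by an arbitrarily small enlargement of $\phi$) then shows the bound cannot first fail at any finite $T$, so it holds for all $t\ge\tau$, giving \eqref{concl:thm:main}; the uniform estimate $d_x\le D<\infty$ supplies the position part of \eqref{def:flocking}, completing the proof.
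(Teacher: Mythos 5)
Your proposal is correct and follows essentially the same route as the paper: an initial-layer bound giving the prefactor $(1+2\tau)$, a dissipative differential inequality $\tfrac{\d}{\d t}d_v \le -\Psi(d_x(\cdot-\tau))\,d_v + 4\int_{t-2\tau}^{t-\tau}d_v$ obtained from extremality of the maximizing pair and the elementwise bound $\psi_{ij}\ge \Psi(d_x)/N$ together with $\sum_j\psi_{ij}\le 1$, and a bootstrap closed by a Gronwall--Halanay-type comparison whose supersolution condition reduces exactly to \eqref{ass:C}. The only differences are cosmetic: you extract the decay term via $\min(\psi_{ik},\psi_{jk})$ and $a_k+b_k=d_v$ where the paper sums the two nonpositive cross terms directly, and you run the continuation on the pointwise bound $d_v\le\phi$ where the paper runs it on the integral $\int_\tau^t d_v$.
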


Condition \eqref{ass:C} is highly nonlinear and, obviously, not verifiable analytically,
apart from trivial cases like $\Psi\equiv 1$.
However, observe that the right-hand side in \eqref{ass:C} is, for a fixed $C>0$, an increasing function of $\tau$,
and vanishing for $\tau\to 0+$. On the other hand, the left-hand side is, for fixed $d_x^0$ and $d_v^0$,
decreasing in $\tau$, and strictly positive for $\tau\to 0+$ if $C>0$ is small enough and $\Psi$ is globally positive
(but does \emph{not} need to be uniformly bounded away from zero).
Consequently, \eqref{ass:C} is to be interpreted, for fixed $d_x^0$ and $d_v^0$, as a smallness condition for $\tau$,
relative to the decay rate of $\Psi$.
Clearly, since $\Psi\leq 1$ by assumption, the necessary condition for \eqref{ass:C} to be verified is $\tau < 1/4$.
In the special case $\Psi\equiv 1$, \eqref{ass:C} is equivalent to $\tau < 1/4$.

A slightly simpler version of \eqref{ass:C} is obtained  for the case
when the initial velocities $v_i^0$ are all constant on $[-\tau,0]$.
Then, by obvious modifications of the steps carried out in Section \ref{sec:proof}, one obtains the following
simplified version of Theorem \ref{thm:main}.

\begin{theorem} \label{thm:main2}
Let the initial velocities $v_i^0$ be all constant on $[-\tau,0]$.
Let the assumptions of Theorem \ref{thm:main} be verified, with \eqref{ass:C} replaced by
\[    % \label{ass:C2}
     \Psi\left( d_x^0 + \frac{d_v^0}{C} \right) - C \geq 4\tau e^{C\tau} \frac{e^{C\tau}-1}{C\tau}.
\]
Then the system \eqref{eq:1}--\eqref{IC} exhibits flocking in the sense of definition \eqref{def:flocking}.
Moreover, the decay of the velocity diameter is exponential with rate $C$,
\[
   d_v(t) \leq  d_v^0 \, e^{-C\tau} \qquad \mbox{for all } t \geq 0.
\]
\end{theorem}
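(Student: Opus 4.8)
\emph{Proof strategy.} The plan is to reproduce the bootstrap behind Theorem~\ref{thm:main}, but anchored at $t=0$: since the $v_i^0$ are constant on $[-\tau,0]$ we have $d_v(t)\equiv d_v^0$ there, which lets me discard the transient factor $(1+2\tau)$ and bootstrap the sharper ansatz
\begin{equation*}
   d_v(t)\le d_v^0\, e^{-Ct}\qquad (t\ge 0).
\end{equation*}
This bound already holds on $[-\tau,0]$ (where $e^{-Ct}\ge 1$) and reduces to the identity $d_v(0)=d_v^0$ at the origin. Let $T^\ast:=\sup\{T\ge 0:\ d_v(t)\le d_v^0 e^{-Ct}\text{ on }[0,T]\}$; the goal is to prove $T^\ast=\infty$, which yields the exponential decay asserted and, together with the spatial bound below, flocking in the sense of \eqref{def:flocking}.

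The first step converts the velocity ansatz into a spatial bound. From $\dot x_i=v_i$ one has $|x_i(t)-x_j(t)|\le d_x^0+\int_0^t d_v(s)\,\d s$, so on $[0,T^\ast)$,
\begin{equation*}
   d_x(t)\le d_x^0 + d_v^0\int_0^\infty e^{-Cs}\,\d s = d_x^0 + \frac{d_v^0}{C}=:D .
\end{equation*}
This $D$ is exactly the argument of $\Psi$ in the hypothesis of Theorem~\ref{thm:main2}, and this is precisely where constancy is used: there is no initial layer on $[0,\tau]$ and no variation of $v^0$ to absorb, which in the general case produce the factor $(1+2\tau)(\tau+\tfrac1C)$ in \eqref{ass:C}. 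Since $(1+2\tau)(\tau+\tfrac1C)\ge \tfrac1C$, the present hypothesis is strictly weaker than \eqref{ass:C}, so Theorem~\ref{thm:main} cannot be invoked directly and the argument must be re-run with these sharper bounds. Because $|x_i-x_j|\le d_x\le D$ and $\Psi$ is the nonincreasing rearrangement \eqref{Psi} of $\psi$, every weight obeys the uniform floor $\psi_{ij}\ge \Psi(D)/N$ in both normalizations \eqref{psi:nonr0} and \eqref{psi:r0} (using $\psi\le 1$, hence $\sum_\ell\psi\le N$, in the latter), and in both cases $\sum_j\psi_{ij}\le 1$.

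The analytic core is the decay estimate for $d_v$. Fixing $t$ and a maximizing pair $M,m$ with $d_v(t)=|v_M(t)-v_m(t)|$, I project onto the unit vector $\omega$ along $v_M(t)-v_m(t)$ and set $p_k:=v_k(t)\cdot\omega$; the standard extremality argument gives $p_m\le p_k\le p_M$ for all $k$. Writing $a_k:=p_M-p_k\ge 0$ and $b_k:=p_k-p_m\ge 0$ with $a_k+b_k\equiv d_v(t)$, the alignment part of $\tot{}{t}\tfrac12 d_v^2$ equals $-d_v\sum_k(\psi_{Mk}a_k+\psi_{mk}b_k)$ (the weights, evaluated at $t-\tau$, obey the same floor since $d_x\le D$ on $[-\tau,T^\ast)$); the bound $\psi_{ij}\ge\Psi(D)/N$ together with $\sum_k(a_k+b_k)=N d_v$ then gives the clean drift $-\Psi(D)\,d_v^2$ with no residual $1/N$ — treating the $M$- and $m$-sums jointly is what unifies the two normalizations. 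The self-delay enters only as a perturbation: replacing $v_k(t-\tau)$ by $v_k(t)$ costs $\eta_k:=v_k(t)-v_k(t-\tau)$ with $|\eta_k|\le\int_{t-2\tau}^{t-\tau}d_v(u)\,\d u$, since $|\dot v_k|\le d_v(\cdot-\tau)$ from $\sum_j\psi_{ij}\le 1$. Collecting the two index-sums of the differences $\eta_j-\eta_M$ and $\eta_j-\eta_m$ yields the Gronwall--Halanay inequality
\begin{equation*}
   \tot{}{t} d_v(t)\le -\Psi(D)\,d_v(t) + 4\int_{t-2\tau}^{t-\tau} d_v(u)\,\d u .
\end{equation*}

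To close the bootstrap I use a comparison step. Inserting the ansatz and computing $4\int_{t-2\tau}^{t-\tau} d_v^0 e^{-Cu}\,\d u=\tfrac{4}{C}\,d_v^0 e^{-Ct}e^{C\tau}(e^{C\tau}-1)$ shows that $d_v^0 e^{-Ct}$ is a supersolution of the displayed inequality exactly when $\Psi(D)-C\ge \tfrac{4}{C}e^{C\tau}(e^{C\tau}-1)=4\tau e^{C\tau}\tfrac{e^{C\tau}-1}{C\tau}$, i.e. precisely the hypothesis of Theorem~\ref{thm:main2} with $D=d_x^0+d_v^0/C$. Since the comparison function dominates $d_v$ on $[-\tau,0]$, a standard comparison principle for delay differential inequalities reproduces $d_v(t)\le d_v^0 e^{-Ct}$ on $[0,T^\ast)$ with room to spare, so the bound cannot first be violated at any finite $T^\ast$, whence $T^\ast=\infty$; this gives $\lim_{t\to\infty}d_v(t)=0$ while $\sup_{t\ge0}d_x(t)\le D<\infty$. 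I expect the third step to be the main obstacle: extracting the sharp coefficient $\Psi(D)$ (rather than $\Psi(D)/N$) uniformly for both non-symmetric normalizations, while controlling the delay error so that it closes self-consistently against the very same rate $C$ — this tight coupling between the weight floor, the delay perturbation, and the target rate is what makes the bootstrap nontrivial.
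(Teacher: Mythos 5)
Your proposal is correct and follows essentially the same route as the paper: the paper proves Theorem \ref{thm:main2} by ``obvious modifications'' of the proof of Theorem \ref{thm:main}, namely the same decay estimate for $d_v$ (Lemma \ref{lem:4}), the same Gronwall--Halanay comparison (your explicit supersolution computation is exactly the content of Lemma \ref{lem:Halanay} with $\alpha=4\tau$, $\beta=\Psi(D)$), and the same bootstrap, with the initial-layer factor $(1+2\tau)$ and the term $\tau d_v^0$ dropped because $d_v\equiv d_v^0$ on $[-\tau,0]$. You also correctly read the stated conclusion as $d_v(t)\le d_v^0e^{-Ct}$ (the paper's $e^{-C\tau}$ is evidently a typo); the only detail worth adding is that for $t\in(0,\tau)$ the memory term $\int_{t-2\tau}^{t-\tau}d_v$ reaches back to $[-2\tau,-\tau)$, where one should extend $d_v$ by the constant $d_v^0$.
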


\begin{remark}\label{rem:why}
Before we proceed with the proof of Theorem \ref{thm:main}
let us give a short explanation why we cannot expect flocking
to take place in \eqref{eq:1}--\eqref{eq:2} for arbitrary delay lengths $\tau>0$,
even for small initial data.
Indeed, considering the simple case of two agents, $N=2$, in one spatial dimension $d=1$,
with $\psi_{12} \equiv \psi_{21} \equiv 1$, the system \eqref{eq:2} reduces to
\[
    \dot w(t) = -2 w(t - \tau)
\]
for $w:=v_1-v_2$.
Nontrivial solutions of this equation exhibit oscillations whenever $2\tau > e^{-1}$
and their amplitude diverges in time if $2\tau > \pi/2$, see, e.g., \cite{Smith}.
In other words, the system never reaches flocking if $2\tau > \pi/2$,
apart from the trivial case $w\equiv 0$.
\end{remark}

%Let us note that allowing for nonsymmetry of the communication weights,
%as we do in this paper, has the consequence that the mean velocity $\frac{1}{N} \sum_{i=1}^N v_i$
%is, in general, not conserved. Therefore, the asymptotic flocking velocity vector, if reached, cannot be
%easily inferred from the initial datum.

%%%%%%%%%%%%%%%%%%%%%%%%%%%%%%%%%%
\section{Proof of Theorem \ref{thm:main}}\label{sec:proof}
Let us start by making two simple observations about the communication weights $\psi_{ij}$.
First, due to the assumption $\psi\leq 1$, we have for both the classical \eqref{psi:nonr0} and normalized \eqref{psi:r0} weights
the upper bound
\(   \label{subconv}
   \sum_{i=1}^N \psi_{ij}(t) \leq 1\qquad\mbox{for all } i\in[N] \mbox{ and } t\geq 0.
\)
In fact, for the normalized weights \eqref{psi:r0} the above holds even with equality, but we do not make use of this property in our proof.
Second, due to the assumed continuity of $\psi$, we have the lower bound
\(  \label{Psi:cont}
   \psi_{ij}(t) \geq \frac{\Psi(d_x(t))}{N} \quad \mbox{for all } t\geq 0 \mbox{ and all } i, j\in [N],
\)
with $\Psi$ given by \eqref{Psi}.
Indeed, for \eqref{psi:nonr0} we have
\[
   \psi_{ij}(t) = \frac{1}{N} \psi(|x_i(t)-x_j(t)|) \geq \frac{1}{N}  \Psi(|x_i(t)-x_j(t)|) \geq \frac{\Psi(d_x(t))}{N}.
\]
For \eqref{psi:r0} the same follows due to the assumption $\psi\leq 1$,
\[
    \psi_{ij}(t) \geq \frac{\psi(|x_j(t) - x_i(t)|)}{N} \geq \frac{\Psi(d_x(t))}{N}.
\]

We first prove a result on the decay of the velocity diameter $d_v=d_v(t)$.
%under the assumption that the communication weights $\psi_{ij}$ are a-priori  bounded from below by a positive constant.

\begin{lemma} \label{lem:4}
Let the communication weights $\psi_{ij}$ satisfy \eqref{subconv}
%and assume that there exists $\upsi >0$ such that, for some $T>\tau$,
and for a fixed $T>\tau$ denote
\(  \label{ass:lem:4}
   \upsi := N \min_{t\in [\tau,T]} \min_{i\neq j\in [N]} \psi_{ij}(t-\tau).
   %\psi_{ij}(t-\tau) \geq \frac{\upsi}{N} \qquad\mbox{for all } t\in [\tau,T] \mbox{ and } i,j\in [N].
\)
Then, along the solutions of \eqref{eq:1}--\eqref{eq:2}, 
\(  \label{ineq:react_niw}
   \tot{}{t} d_v (t) \leq 4 \int_{t-\tau}^t d_v(s-\tau) \d s - \upsi d_v (t)
\)
for almost all $t\in (\tau,T)$.
\end{lemma}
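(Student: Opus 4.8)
The plan is to bound the time derivative of $d_v(t)$ by differentiating the squared distance between the two extremal agents, then estimating the resulting terms using the structure of the delay equation \eqref{eq:2} and the sub-convexity bound \eqref{subconv}.

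The plan is to control the almost-everywhere derivative of $d_v$ by freezing the extremal pair and then transferring the delayed velocities back to the present time, which separates a genuinely dissipative term from a delayed error term. Since $d_v$ is the maximum of the finitely many locally Lipschitz functions $|v_i-v_j|$, it is itself locally Lipschitz, hence differentiable for a.e.\ $t\in(\tau,T)$. At such a $t$ with $d_v(t)>0$ I would fix a pair $(i,j)$ realizing the maximum, set $w:=v_i-v_j$ and $e:=w(t)/|w(t)|$, and differentiate the active branch to get $\tot{}{t}d_v(t)=e\cdot\dot w(t)$. Substituting \eqref{eq:2} expresses $e\cdot\dot w(t)$ as a difference of two weighted sums evaluated at time $t-\tau$.

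Next I would write $v_k(t-\tau)=v_k(t)-\int_{t-\tau}^t\dot v_k(s)\,\d s$ for every $k$ (legitimate since $t-\tau>0$ and the velocities are $C^1$ on $(0,\infty)$), thereby splitting $e\cdot\dot w(t)=I-II$. Here $I$ collects the present-time contributions $\sum_k\psi_{ik}(t-\tau)\,e\cdot(v_k(t)-v_i(t))-\sum_k\psi_{jk}(t-\tau)\,e\cdot(v_k(t)-v_j(t))$, while $II$ collects the integral remainders.

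The term $I$ is where the dissipation $-\upsi\,d_v(t)$ must appear, and this is the crux of the proof. Writing $p_k:=e\cdot v_k(t)$, the choice of $(i,j)$ forces the ordering $p_j\le p_k\le p_i$ for all $k$ with $p_i-p_j=d_v(t)$, which follows from $p_i-p_k\le|v_i-v_k|\le d_v(t)$ and, symmetrically, $p_k-p_j\le|v_k-v_j|\le d_v(t)$. Then $-I=\sum_k\psi_{ik}(t-\tau)(p_i-p_k)+\sum_k\psi_{jk}(t-\tau)(p_k-p_j)$ has only nonnegative summands; replacing each weight by its lower bound $\psi_{ik}(t-\tau)\ge\upsi/N$ from \eqref{ass:lem:4} (valid for $k\ne i$, the diagonal term vanishing) and summing over \emph{all} $k$ yields $-I\ge\frac{\upsi}{N}\big(\sum_k(p_i-p_k)+\sum_k(p_k-p_j)\big)=\upsi(p_i-p_j)=\upsi\,d_v(t)$, i.e.\ $I\le-\upsi\,d_v(t)$. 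The main obstacle is exactly this factor-$N$ bookkeeping: using only the single extremal partner $k=j$ would lose a factor $N$ and give the far weaker rate $\upsi/N$; it is the summation over all agents, enabled by the uniform lower bound on the weights together with the ordering, that recovers the full rate $\upsi$.

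Finally, for the error term $II$ I would estimate crudely: from \eqref{eq:2} and the subconvexity bound \eqref{subconv} one gets $|\dot v_k(s)|\le d_v(s-\tau)$ for each $k$, hence $|e\cdot(\dot v_k(s)-\dot v_i(s))|\le 2\,d_v(s-\tau)$, and since $\sum_k\psi_{ik}(t-\tau)\le 1$ and $\sum_k\psi_{jk}(t-\tau)\le 1$ again by \eqref{subconv}, the two integral sums give $|II|\le 4\int_{t-\tau}^t d_v(s-\tau)\,\d s$. Combining with $\tot{}{t}d_v(t)=I-II\le-\upsi\,d_v(t)+|II|$ produces exactly \eqref{ineq:react_niw}. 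The only remaining points, both routine, are the a.e.\ differentiability of $d_v$ and the degenerate case $d_v(t)=0$, where the left-hand side vanishes while the right-hand side is nonnegative.
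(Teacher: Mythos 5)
Your proof is correct and follows essentially the same route as the paper's: decompose the delayed velocity differences into their present-time values plus an integral remainder, bound the remainder by $4\int_{t-\tau}^t d_v(s-\tau)\,\d s$ via \eqref{subconv}, and extract the dissipative term $-\upsi\, d_v(t)$ from the present-time part using the extremality of the maximizing pair together with the uniform lower bound $\psi_{ij}\geq\upsi/N$ summed over all $N$ agents. The only differences are cosmetic (you differentiate $d_v$ directly via the unit vector $e$ and scalar projections $p_k$, while the paper differentiates $d_v^2$ and works with inner products on intervals where the maximizing pair is fixed), and your identification of the factor-$N$ bookkeeping as the crux matches the paper's argument exactly.
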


\begin{proof}
For the sake of legibility, in this section we shall use the shorthand notation $\widetilde v_j := v_j(t-\tau)$,
while $v_j$ stands for $v_j(t)$.

Due to the continuity of the velocity trajectories $v_i=v_i(t)$,
there is an at most countable system of open, mutually disjoint
intervals $\{\mathcal{I}_\sigma\}_{\sigma\in\N}$ such that
$$
   \bigcup_{\sigma\in\N} \overline{\mathcal{I}_\sigma} = [\tau,\infty)
$$
and for each ${\sigma\in\N}$ there exist indices $i(\sigma)$, $k(\sigma)$
such that
$$
   d_v(t) = |v_{i(\sigma)}(t) - v_{k(\sigma)}(t)| \quad\mbox{for } t\in \mathcal{I}_\sigma.
$$
Then, using the abbreviated notation $i:=i(\sigma)$, $k:=k(\sigma)$,
we have for every $t\in \mathcal{I}_\sigma$,
\(
   \frac12 \tot{}{t} d_v(t)^2 &=& (\dot v_i - \dot v_k)\cdot (v_i-v_k)    \nonumber\\
      &=& 
       \left(\sum_{j=1}^N \psi_{ij}(t) (\widetilde v_j - \widetilde v_i) - \sum_{j=1}^N \psi_{kj}(t) (\widetilde v_j - \widetilde v_k) \right) \cdot (v_i-v_k).
       \label{4:2}
\)
We process the first term of the right-hand side as follows,
\(   \label{4:1}
    \sum_{j=1}^N \psi_{ij}(t) (\widetilde v_j - \widetilde v_i) \cdot (v_i-v_k) &=&
      \sum_{j=1}^N \psi_{ij}(t) (\widetilde v_j - v_j + v_i - \widetilde v_i) \cdot (v_i-v_k) \\
         && + \sum_{j=1}^N \psi_{ij}(t) (v_j- v_i) \cdot (v_i-v_k).  \nonumber
\)
Noting that $t\geq\tau$, we estimate the difference $|\widetilde v_j - v_j|$ by
\[
   |\widetilde v_j - v_j| &\leq& \int_{t-\tau}^t |\dot v_j(s)| \,\d s \\
      &\leq&   %\int_{-[t-\tau]^-}^0 |\dot x_j(s)| \,\d s  + 
          \int_{t-\tau}^t \sum_{\ell=1}^N \psi_{j\ell}(s) |v_\ell(s-\tau) - v_j(s-\tau)| \,\d s \\
      &\leq&  % \int_{-[t-\tau]^-}^0 d_x^0 \,\d s  + 
          \int_{t-\tau}^t \sum_{\ell=1}^N \psi_{j\ell}(s) \, d_v(s-\tau) \,\d s \\
%      &\leq&  \int_{t-\tau}^t \sum_{\ell\neq j} \psi_{j\ell}(s) d_x(s-\tau) \,\d s \\
      &\leq& \int_{t-\tau}^t d_v(s-\tau) \, \d s,
\]
where for the last equality we used the property \eqref{subconv}.

Performing an analogous estimate for the term $|\widetilde v_i - v_i|$
and using the Cauchy-Schwarz inequality and, again,  \eqref{subconv}, we arrive at
\[
   \sum_{j=1}^N \psi_{ij}(t) (\widetilde v_j - v_j + v_i - \widetilde v_i) \cdot (v_i-v_k) &\leq&
      \sum_{j=1}^N \psi_{ij}(t) \left( |\widetilde v_j - v_j| + |\widetilde v_i - v_i| \right) |v_i-v_k| \\
      &\leq&
      2\, d_v(t) \int_{t-\tau}^t d_v(s-\tau) \,\d s.
\]
To estimate the second term of the right-hand side of \eqref{4:1}, observe that, using the Cauchy-Schwarz inequality, we have
\[
   (v_j- v_i)\cdot (v_i-v_k) &=& (v_j-v_k)\cdot(v_i-v_k) - |v_i-v_k|^2 \\
      &\leq& |v_i-v_k| \bigl( |v_j-v_k| - |v_i-v_k| \bigr) \leq 0,
\]
since, by definition, $|v_j-v_k| \leq d_v = |v_i-v_k|$.
Then, with \eqref{ass:lem:4}, we have
%Moreover, we have by assumption $\psi(|\widetilde x_i - \widetilde x_j|) \geq \upsi$ and $\psi\leq 1$, so that
%\[ \psi_{ij}(t) = \frac{\psi(|\widetilde x_i - \widetilde x_j|)}{\sum_{\ell\neq i} \psi(|\widetilde x_i - \widetilde x_\ell|)} \geq \frac{\upsi}{N-1}. \]
%Consequently, the last term of \eqref{4:1} is estimated as
\[
   \sum_{j=1}^N \psi_{ij}(t) (v_j- v_i) \cdot (v_i-v_k) \leq
        \frac{\upsi}{N} \sum_{j=1}^N (v_j- v_i) \cdot (v_i-v_k).
\]
Repeating the same steps for the second term of the right-hand side of \eqref{4:2}, we finally arrive at
\[
      \frac12 \tot{}{t} d_v(t)^2 &\leq&
           4\, d_v(t) \int_{t-\tau}^t d_v(s-\tau) \,\d s
         + \frac{\upsi}{N} \left( \sum_{j=1}^N (v_j- v_i) \cdot (v_i-v_k) - \sum_{j=1}^N (v_j- v_k) \cdot (v_i-v_k) \right) \\
         &=&
           4\, d_v(t) \int_{t-\tau}^t d_v(s-\tau) \,\d s - |v_i-v_k|^2 \\
       &\leq&
           4\, d_v(t) \int_{t-\tau}^t d_v(s-\tau) \,\d s - {\upsi}\, d_v(t)^2,
\]
from which \eqref{ineq:react_niw} directly follows, for almost all $t\in (\tau,T)$.
%Consequently, for almost all $t\in (0,T)$,
%\[   \tot{}{t} d_x(t) \leq  4 \int_{t-\tau}^t d_x(s-\tau) \,\d s - {\upsi}\, d_x(t).   \]
\end{proof}

The proof of Theorem \ref{thm:main} shall be based on the decay estimate of Lemma \ref{lem:4},
combined with the following variant of the Gronwall-Halanay lemma \cite{Halanay}.

 \begin{lemma}\label{lem:Halanay}
 Fix $\tau>0$ and let $u\in{C}([-\tau,\infty))$ be a nonnegative continuous function
with piecewise continuous derivative on $(\tau,\infty)$, such that
for almost all $t>\tau$ the integro-differential inequality is satisfied,
\(   \label{Halanay:1}
   \tot{}{t} u(t) \leq \frac{\alpha}{\tau} \int_{t-\tau}^t u(s-\tau) \, \d s - \beta u(t),
\)
with constants $0 < \alpha < \beta$.
Then there exists a unique $\gamma\in(0,\beta-\alpha)$ such that
\(  \label{Halanay:2}
%   \beta - C = \frac{\alpha}{C\tau} e^{C\tau} \left( e^{C\tau} - 1 \right)
   \beta - \gamma = \alpha e^{\gamma\tau} \, \frac{e^{\gamma\tau} - 1}{\gamma\tau},
\)
and the estimate holds
\( \label{Halanay:3}
   u(t) \leq \left( \max_{s\in[-\tau,\tau]} u(s) \right) e^{-\gamma(t-\tau)} \qquad \mbox{for all } t \geq \tau.
\)
\end{lemma}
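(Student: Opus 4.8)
The plan is to first pin down the decay rate $\gamma$ and then run a comparison (supersolution) argument against the exponential envelope $e^{-\gamma(t-\tau)}$. For existence and uniqueness of $\gamma$, I would study the scalar function $F(\gamma) := \beta - \gamma - \alpha e^{\gamma\tau}\frac{e^{\gamma\tau}-1}{\gamma\tau}$ on $(0,\beta-\alpha)$. Writing $x:=\gamma\tau$ and using the identity $e^{\gamma\tau}\frac{e^{\gamma\tau}-1}{\gamma\tau} = \frac{e^{2x}-e^{x}}{x} = \frac1x\int_0^x (2e^{2s}-e^s)\,\d s$, the last expression is the average of the strictly increasing function $s\mapsto 2e^{2s}-e^s$ over $[0,x]$, hence strictly increasing in $x$ and tending to $1$ as $x\to0+$. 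Consequently $F$ is strictly decreasing, with $F(0+)=\beta-\alpha>0$ and $F(\beta-\alpha)=\alpha\bigl(1-e^{(\beta-\alpha)\tau}\frac{e^{(\beta-\alpha)\tau}-1}{(\beta-\alpha)\tau}\bigr)<0$. The intermediate value theorem then yields a unique root $\gamma\in(0,\beta-\alpha)$, which is exactly \eqref{Halanay:2}.

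The role of \eqref{Halanay:2} is that, for this $\gamma$, the function $\bar u(t):=e^{-\gamma t}$ (and every positive multiple of it) satisfies \eqref{Halanay:1} with equality: a direct computation gives $\int_{t-\tau}^t e^{-\gamma(s-\tau)}\,\d s = e^{-\gamma t}\,e^{\gamma\tau}\frac{e^{\gamma\tau}-1}{\gamma}$, so substituting $\bar u$ into the right-hand side of \eqref{Halanay:1} and dividing by $e^{-\gamma t}$ reproduces precisely $\beta-\gamma = \alpha e^{\gamma\tau}\frac{e^{\gamma\tau}-1}{\gamma\tau}$.

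For the comparison argument, set $M:=\max_{s\in[-\tau,\tau]} u(s)$ and, for fixed $\eps>0$, $\bar u_\eps(t):=(M+\eps)\,e^{-\gamma(t-\tau)}$. Since $-\gamma(t-\tau)\geq0$ on $[-\tau,\tau]$, we have $\bar u_\eps(t)\geq M+\eps> u(t)$ there, with a gap of at least $\eps$. I claim $u(t)<\bar u_\eps(t)$ for all $t\geq\tau$; letting $\eps\to0+$ then gives \eqref{Halanay:3}. Suppose not, and let $t^\ast:=\inf\{t>\tau:\ u(t)=\bar u_\eps(t)\}$. By continuity $u(t^\ast)=\bar u_\eps(t^\ast)$ and $u<\bar u_\eps$ on $[-\tau,t^\ast)$, so the lower-left Dini derivative of $u-\bar u_\eps$ at $t^\ast$ is nonnegative. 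On the other hand, the entire delayed window $\{s-\tau:\ s\in[t^\ast-\tau,t^\ast]\}=[t^\ast-2\tau,t^\ast-\tau]$ lies in $[-\tau,t^\ast)$, where $u<\bar u_\eps$ holds \emph{strictly} (either via the initial gap on $[-\tau,\tau]$ or via the definition of $t^\ast$ on $(\tau,t^\ast)$). Hence the integral term in \eqref{Halanay:1} is strictly smaller for $u$ than for $\bar u_\eps$, and combining this with $u(t^\ast)=\bar u_\eps(t^\ast)$ and the equality satisfied by $\bar u_\eps$ yields $\tot{}{t}u(t^\ast) < \tot{}{t}\bar u_\eps(t^\ast)$, i.e. $u-\bar u_\eps$ has strictly negative left derivative at $t^\ast$ — contradicting the previous sentence.

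I expect the touching-point analysis to be the main obstacle. The unperturbed envelope $\bar u$ only produces the non-strict inequality $\dot u(t^\ast)\leq\dot{\bar u}(t^\ast)$, which is insufficient to close the contradiction. The device of enlarging the initial data by $\eps$ forces a strict gap across the full delayed window $[t^\ast-2\tau,t^\ast-\tau]$ (of length $\tau$), turning the integral comparison into a strict one; the normalizing factor $\frac1\tau$ and the interval $[-\tau,\tau]$ appearing in \eqref{Halanay:3} are exactly what make this bookkeeping consistent. A minor additional care is needed because \eqref{Halanay:1} holds only almost everywhere and $u$ is merely piecewise $C^1$, which I would handle by working throughout with one-sided Dini derivatives (equivalently, with the integrated form of \eqref{Halanay:1}).
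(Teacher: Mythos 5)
Your argument is correct and complete. Note that the paper itself does not write out a proof of this lemma --- it only cites \cite[Lemma 2.5]{ChoiH1} and \cite[Lemma 3.3]{H:SIADS} --- and those references proceed exactly along your lines: define $\gamma$ as the root of the characteristic-type equation \eqref{Halanay:2} (your monotonicity argument via the averaged integrand $2e^{2s}-e^s$ cleanly gives existence and uniqueness on $(0,\beta-\alpha)$), observe that $e^{-\gamma t}$ saturates \eqref{Halanay:1}, and run a first-touching-time comparison against the $\eps$-inflated envelope. You also correctly identify and resolve the only delicate point, namely that \eqref{Halanay:1} holds merely almost everywhere, by passing to the integrated form on $[t^\ast-h,t^\ast]$; the strict gap on the compact delayed window $[t^\ast-2\tau,t^\ast-\tau]\subset[-\tau,t^\ast)$ then persists for small $h$ by continuity and closes the contradiction.
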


\begin{proof}
The proof is obtained as a slight generalization of \cite[Lemma 2.5]{ChoiH1} and \cite[Lemma 3.3]{H:SIADS}.
\end{proof}

\begin{lemma}
Along the solutions of \eqref{eq:1}--\eqref{eq:2}, we have
\(   \label{bound:tau}
   \max_{s\in[-\tau,\tau]} d_v(s) \leq (1+2\tau) d_v^0,
\)
with $d_v^0$ defined in \eqref{def:dx0dv0}.
\end{lemma}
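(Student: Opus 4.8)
The plan is to split the interval $[-\tau,\tau]$ into the two pieces $[-\tau,0]$ and $[0,\tau]$. On the first piece the bound is immediate: by the very definition \eqref{def:dx0dv0} of $d_v^0$ we have $d_v(s)\leq d_v^0 \leq (1+2\tau)d_v^0$ for every $s\in[-\tau,0]$. All the work is therefore concentrated on $[0,\tau]$, where I would control $d_v(t)$ by integrating the velocity dynamics \eqref{eq:2} forward from $t=0$.

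The key observation is that for $t\in[0,\tau]$ the delayed argument $t-\tau$ lands in $[-\tau,0]$, so the right-hand side of \eqref{eq:2} is evaluated entirely on the prescribed initial data. First I would bound the individual accelerations: for $s\in[0,\tau]$,
\[
   |\dot v_i(s)| &\leq& \sum_{j=1}^N \psi_{ij}(s-\tau)\,|v_j(s-\tau)-v_i(s-\tau)| \\
      &\leq& \sum_{j=1}^N \psi_{ij}(s-\tau)\, d_v(s-\tau) \;\leq\; d_v(s-\tau) \;\leq\; d_v^0,
\]
where the third inequality uses the summation property \eqref{subconv} and the last one uses $s-\tau\in[-\tau,0]$ together with the definition of $d_v^0$. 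In particular $|\dot v_i(s)-\dot v_k(s)|\leq 2d_v^0$ for every pair of indices $i,k\in[N]$.

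With this acceleration bound in hand, I would apply the fundamental theorem of calculus to each pair $i,k$: for $t\in[0,\tau]$,
\[
   |v_i(t)-v_k(t)| &\leq& |v_i(0)-v_k(0)| + \int_0^t |\dot v_i(s)-\dot v_k(s)|\,\d s \\
      &\leq& d_v^0 + 2\tau\, d_v^0 \;=\; (1+2\tau)\,d_v^0,
\]
using $|v_i(0)-v_k(0)|\leq d_v(0)\leq d_v^0$ and $t\leq\tau$. Taking the maximum over $i,k$ gives $d_v(t)\leq(1+2\tau)d_v^0$ on $[0,\tau]$, and combining this with the trivial bound on $[-\tau,0]$ yields \eqref{bound:tau}.

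This argument is essentially routine and I do not expect a genuine obstacle. The only points that deserve a little care are the validity of the summation estimate \eqref{subconv} at the possibly negative times $s-\tau\in[-\tau,0]$, which holds because it is a pointwise consequence of $\psi\leq 1$ and the definitions \eqref{psi:nonr0}--\eqref{psi:r0}, independently of the sign of the time argument; and the fact that, although \eqref{eq:1} is not imposed on the initial interval, the velocity equation \eqref{eq:2} does govern $\dot v_i$ for $t>0$, so the forward integration from $t=0$ is legitimate.
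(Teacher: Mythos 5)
Your argument is correct and follows essentially the same route as the paper: the acceleration bound $|\dot v_i(s)|\leq d_v^0$ on $(0,\tau]$ via \eqref{subconv} and the fact that the delayed argument lies in the initial interval, followed by integration from $t=0$ to get $d_v(t)\leq d_v(0)+2\tau d_v^0\leq(1+2\tau)d_v^0$. The only cosmetic difference is that you make the trivial case $s\in[-\tau,0]$ and the applicability of \eqref{subconv} at negative times explicit, which the paper leaves implicit.
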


\begin{proof}
From \eqref{eq:2} we have for all $t\in (0,\tau]$ and $i\in[N]$,
\[
   \left| \dot v_i(t) \right|  \leq \sum_{j=1}^N \psi_{ij}(t-\tau)  \left| v_j(t-\tau) - v_i(t-\tau) \right|
      \leq \sum_{j=1}^N \psi_{ij}(t-\tau)  d_v(t-\tau) 
      \leq d_v^0,
\]
where the last inequality follows from \eqref{subconv}.
Therefore, still for $t\in (0,\tau]$,
\[
   \left| v_i(t) - v_j(t) \right| &\leq& \left| v_i(0) - v_j(0) \right| + \int_0^t \left( \left| \dot v_i(s) \right| + \left| \dot v_j(s) \right| \right) \d s  \\
      &\leq&  d_v(0) + 2\tau d_v^0 \\
      &\leq& (1+2\tau) d_v^0,
\]
and taking a maximum over $i,j\in[N]$ yields \eqref{bound:tau}
\end{proof}

We are now ready to carry out the proof of Theorem \ref{thm:main}.

\begin{proof}
Let $C\in (0,1)$ be given by \eqref{ass:C}.
Due to \eqref{bound:tau} and the continuity of $d_v=d_v(t)$, there exists some $T>\tau$ such that
\(   \label{CSbound}
   \int_\tau^t d_v(s) \d s < \frac{(1+2\tau)d_v^0}{C} \qquad \mbox{for all } t\in (\tau,T).
\)
We claim that $T=\infty$. For contradiction, assume that \eqref{CSbound} holds only until some finite $T>\tau$.
Then we have
\(  \label{forContr}
   \int_\tau^T d_v(s) \d s = \frac{(1+2\tau)d_v^0}{C} . 
\)
By \eqref{eq:1} we readily have
\(   \label{est:dx}
   d_x(t-\tau) \leq d_x^0 + \int_0^{t-\tau} d_v(s) \d s.
\)
%so that
%\[  |\widetilde x_j - \widetilde x_i| \leq d_x(t-\tau) \leq d_x^0 + \int_0^{t-\tau} d_v(s) \d s. %< d_x^0 + \frac{d_v(0)}{C}.  \]
The bounds \eqref{bound:tau} and \eqref{CSbound} imply for all $t\in (\tau,T)$,
\[
   \int_0^{t-\tau} d_v(s) \d s &=& \int_0^{\min\{\tau,t-\tau\}} d_v(s) \d s + \int_{\min\{\tau,t-\tau\}}^{t-\tau} d_v(s) \d s \\
      &\leq& (1+2\tau) \min\{\tau,t-\tau\} \d_v^0 + \frac{(1+2\tau)d_v^0}{C} \\
      &\leq& (1+2\tau) \left( \tau + \frac{1}{C}\right) d_v^0.
\]
Consequently,
\(    \label{bound:dx}
    d_x(t-\tau) \leq d_x^0 + (1+2\tau) \left( \tau + \frac{1}{C}\right) d_v^0.
\)
Then, using \eqref{bound:dx} in \eqref{Psi:cont} and recalling that, by definition, $\Psi$ is a nonincreasing function, gives
\(  \label{est:psi}
   \psi_{ij}(t-\tau) \geq \frac{1}{N} \Psi\left( d_x^0 + (1+2\tau) \left( \tau + \frac{1}{C}\right) d_v^0 \right)  % \qquad\mbox{for all } t<T.
\)
for all $i,j\in[N]$ and $t\in(\tau,T)$.
Therefore, denoting %the right-hand side of \eqref{est:psi} by
$\uPsi_C:=\Psi\left( d_x^0 + (1+2\tau) \left( \tau + \frac{1}{C}\right) d_v^0 \right)$,
we have
\[
   N \min_{t\in [\tau,T]} \min_{i\neq j\in [N]} \psi_{ij}(t-\tau) \geq \uPsi_C
\]
and Lemma \ref{lem:4} gives
\[
   \tot{}{t} d_v (t) \leq 4 \int_{t-\tau}^t d_v(s-\tau) \d s - \uPsi_C d_v(t) \qquad\mbox{for } t\in(\tau,T).
\]
Noting that assumption \eqref{ass:C} with $C\in(0,1)$ implies $\uPsi_C > 4\tau$,
we apply Lemma \ref{lem:Halanay} with $\alpha:=4\tau$ and $\beta:=\uPsi_C$.
This leads to
\(   \label{dv_decay}
   d_v(t) \leq \left( \max_{s\in[-\tau,\tau]} d_v(s) \right) e^{-\gamma(t-\tau)} \leq (1+2\tau) d_v^0 e^{-\gamma(t-\tau)} \qquad \mbox{for } t\in [\tau,T],
\)
with $\gamma\in(0, \uPsi_C - 4\tau)$ the unique solution of
\[
      \uPsi_C - \gamma = 4\tau e^{\gamma\tau} \, \frac{e^{\gamma\tau} - 1}{\gamma\tau}.
\]
Comparing with \eqref{ass:C}, we have
\[
      4\tau e^{C\tau} \, \frac{e^{C\tau}-1}{C\tau} + C \leq 4\tau e^{\gamma\tau} \, \frac{e^{\gamma\tau} - 1}{\gamma\tau} + \gamma,
\]
and the monotonicity of the above expression implies $C\leq \gamma$.
But then \eqref{dv_decay} gives
\[
   \int_\tau^T d_v(t) \d t \leq (1+2\tau) d_v^0 \int_\tau^{T}  e^{-C(t-\tau)} \d t = \frac{(1+2\tau) d_v^0}{C} \left( 1 - e^{-C(T-\tau)} \right) < \frac{(1+2\tau) d_v^0}{C},
\]
which is a contradiction to \eqref{forContr}.
Thus, we conclude that $T=\infty$, i.e., that \eqref{CSbound} holds for all $t>0$.
Consequently, by \eqref{est:dx} we have
\[
   \sup_{t\geq 0} d_x(t) \leq d_x^0 + \int_0^\infty d_v(s) \d s < d_x^0 + (1+2\tau) \left( \tau + \frac{1}{C}\right) d_v^0,
\]
and so the first condition in the definition \ref{def:flocking} of flocking is verified.
Moreover, since \eqref{est:psi} holds for all $t\geq 0$, Lemma \ref{lem:4} with $\upsi:=\upsi_C$ can be applied globally,
and a subsequent application of Lemma \ref{lem:Halanay}
gives the exponential decay of $d_v=d_v(t)$ as claimed by \eqref{concl:thm:main}.
\end{proof}

\begin{remark}
In fact, for our analysis we do not need to restrict the form of the interaction weights $\psi_{ij}$
to either \eqref{psi:nonr0} or \eqref{psi:r0}.
Instead, the proof of Theorem \ref{thm:main} is only based
on the lower \eqref{subconv} and upper \eqref{Psi:cont} bounds on $\psi_{ij}$.
Consequently, its statement remains valid for any form of $\psi_{ij}$,
as long as they verify \eqref{subconv} and \eqref{Psi:cont}.
\end{remark}

%%%%%%%%%%%%%%%%%%%%%%%%%%%%%%%%%%
%\section*{Acknowledgment}
%JH acknowledges the support of the KAUST baseline funds.

%%%%%%%%%%%%%%%%%%%%%%%%%%%%%%%%%%

\end{document}